\theoremstyle{plain}
\newtheorem{theorem}{Theorem}[section]
\newtheorem{definition}[theorem]{Definition}
\newtheorem{lemma}[theorem]{Lemma}
\newtheorem{prop}[theorem]{Proposition}
\newtheorem{cor}[theorem]{Corollary}
\newtheorem{rem}[theorem]{Remark}
\begin{document}
\title{Moduli Space of CR-Projective Complex Foliated Tori}
\author[Costantino Medori \and Adriano Tomassini]{Costantino Medori \and Adriano Tomassini$^*$}
\thanks{$^*$ corresponding author}
\date{}
\address{Dipartimento di Matematica\\ Universit\`a di Parma\\ Viale G.P. Usberti 53/A\\
43100 Parma\\ Italy}
\email{costantino.medori@unipr.it}
\address{Dipartimento di Matematica\\ Universit\`a di Parma\\ Viale G.P. Usberti 53/A\\
43100 Parma\\ Italy}
\email{adriano.tomassini@unipr.it}
\subjclass{53C12; 32V99; 32G13}
\keywords{complex foliated torus; CR-projective torus; polarization; moduli space}
\thanks{This work was supported by the Project MIUR ``Geometric Properties of Real and Complex Manifolds'' and by GNSAGA
of INdAM}
\begin{abstract} We study the moduli space of CR-projective complex foliated tori. We describe it in terms of isotropic subspaces
of Grassmannian and we show that it is a normal complex analytic
space.
\bigskip\par\noindent
\textsc {R\'{e}sum\'{e}.} Nous \'{e}tudions l'\'{e}space des
modules des tores feuillet\'{e}s par des feuilles complexes,
plong\'{e}s de fac\c{o}n CR dans un \'{e}space projective. Nous le d\'{e}crivons en
termes des sous-\'{e}spaces isotropiques de la Grassmanni\`{e}nne
et nous prouvons que c'est un \'{e}space analytique complexe
normale.
\end{abstract}
\maketitle
%%%%%%%%%%%%%%%%% SIMBOLI MATEMATICI %%%%%%%%%%%%%%%%%%%%%%%%%%%%%%%%%%%%%%%%%%%%%%%%%%
\newcommand\C{{\mathbb C}}
\newcommand\N{{\mathbb N}}
\newcommand\Proj{{\mathbb P}}
\newcommand\R{{\mathbb R}}
\newcommand\Z{{\mathbb Z}}
\newcommand\T{{\mathbb T}}
\newcommand{\de}[2]{\frac{\partial #1}{\partial #2}}
\section{Introduction}%%%%%%%%%%%%%%%%%%%%%%     INTRODUZIONE        %%%%%%%%%%%%%%%%%%%%%%%%%%%%%%%%%%%%%%%%%%%%%%%%%%%%%%%%%%%%%%%%%%%%%%%%%%%%%%%%%%%
A {\em complex foliated torus} or {\em CR-torus} is defined as
$\mathbb{T}^{n,k}=\C^n\times \R^k/\Gamma$, where
$\Gamma\subset\C^n\times\R^k$ is a lattice. The natural projection
of $\C^n\times\R^k$ on $\R^k$ induces a foliation of
$\mathbb{T}^{n,k}$ whose leaves are complex manifolds of dimension
$n$. Hence, $\mathbb{T}^{n,k}$ is a compact Levi-flat manifold.
\newline Complex foliated tori are related to {\em quasi-Abelian
varieties}, i.e. to quotients of $\C^n$ by a discrete subgroup. In
fact, any quasi-Abelian variety, as real manifold, is
diffeomorphic to the product of a CR-torus and a real linear
vector space (see e.g. \cite{AG1}, \cite{AG2}). For more details
and results on quasi-Abelian varieties we refer to \cite{AK},
\cite{CC}. \newline In this paper we consider complex foliated
tori endowed with a {\em polarization} $\omega$, namely $\omega$
is a bilinear skew-symmetric form on $\R^{2n}\times\R^{k}$ taking
integral values on $\Gamma\times\Gamma$ and such that its
restriction to $\R^{2n}$ is the imaginary part of a positive
definite Hermitian form on $\C^n$. The existence of a
polarization on a complex foliated torus $\mathbb{T}^{n,k}$ is
equivalent to the CR-embeddability
into a projective space, i.e. to the existence of an analytic embedding of $\mathbb{T}^{n,k}$
into a projective space that is holomorphic along the leaves (see \cite{CT} and the references included).
We will call {\em CR-projective} such tori. Note that the definition of
polarization we take in consideration here is more general of that
one considered in \cite{CT}.\newline
We define the {\em moduli space}
of CR-projective tori and we show that it is a normal complex
analytic space (see Theorem \ref{principale}). In order to do
this, we will present the space of period matrices as a subset of
the Grassmannian of $n$-complex planes in $\C^{2n+k}$.\smallskip\par\noindent
The authors would like to dedicate this paper to the memory of Nicolangelo Medori.
\section{Preliminaries}
Let $\C^n\times \R^k$ be endowed with the natural CR-structure. A {\em complex foliated torus} or
{\em CR-torus} is a torus
$$
\mathbb{T}^{n,k}=\C^n\times \R^k/\Gamma\,,
$$
where $\Gamma$ is a lattice in
$\R^{2n+k}=\R^{2n}\times\R^k$, with the natural CR-structure induced by $\C^n\times \R^k$.
Note that the projection $\pi :\C^n\times \R^k\to \mathbb{T}^{n,k}$ gives rise to a foliation on
$\mathbb{T}^{n,k}$, whose leaves are complex manifolds of dimension $n$.\newline
Let $(z,t)$ denote any point of $\C^n\times \R^k$. A {\em CR-map} $\varphi :\C^n\times \R^k\to \C^n\times \R^k$
is a smooth map of the form
$$
\varphi(z,t)=(f(z,t),h(t))
$$
where $f$ is holomorphic with respect to $z$. A CR-map between complex foliated tori
$\mathbb{T}^{n,k}=\C^n\times \R^k/\Gamma$ and
$\mathbb{T'}^{n,k}=\C^n\times \R^k/\Gamma'$ is given by a CR-map
$$
\tilde{\varphi} :\C^n\times \R^k\to \C^n\times \R^k
$$
such that $\tilde{\varphi}(\Gamma)\subset\Gamma'$. The following lemma characterizes the CR-maps between complex foliated tori.
\begin{lemma}\label{CRmaptori}
Let $\mathbb{T}^{n,k}=\C^n\times \R^k/\Gamma$ and $\mathbb{T'}^{n,k}=\C^n\times \R^k/\Gamma'$ be complex foliated tori and
$\varphi :\mathbb{T}^{n,k}\to \mathbb{T'}^{n,k}$ be a CR-map. Then $\varphi$ is induced by a CR-map
$\tilde{\varphi} : \C^n\times \R^k\to\C^n\times \R^k$ given by
$$
\tilde{\varphi}(z,t) =(Az+Bt+\beta(t),Ct+\gamma(t))\,,
$$
where $A\in M_{n,n}(\C)$, $B\in M_{n,k}(\C)$, $C\in M_{k,k}(\R)$ and
$$
\beta :\C^n\times \R^k\to \C^n\,,\quad \gamma :\C^n\times \R^k\to \R^k
$$
do not depend on $z$, are $\Gamma$-periodic and $(\beta(0),\gamma(0))$ belongs to $\Gamma'$.
\end{lemma}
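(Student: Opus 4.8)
The plan is to lift $\varphi$ to the universal cover $\C^n\times\R^k$, to split the lift into an $\R$-linear part dictated by its action on the lattice plus a bounded periodic remainder, and then to use holomorphy in $z$ together with the boundedness of the remainder to erase every term not appearing in the asserted normal form.

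First I would fix a lift $\tilde\varphi(z,t)=(f(z,t),h(t))$ of $\varphi$, which exists by the very definition of a CR-map between tori and satisfies $\tilde\varphi(\Gamma)\subset\Gamma'$, with $f$ holomorphic in $z$ and $h$ independent of $z$. Writing $\pi,\pi'$ for the two projections, the relation $\pi'\circ\tilde\varphi=\varphi\circ\pi$ shows that for each $\gamma\in\Gamma$ the continuous map $p\mapsto\tilde\varphi(p+\gamma)-\tilde\varphi(p)$ takes values in the discrete set $\Gamma'$; since $\C^n\times\R^k$ is connected it equals a constant $\rho(\gamma)\in\Gamma'$, and $\rho\colon\Gamma\to\Gamma'$ is readily seen to be a group homomorphism. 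In particular $\tilde\varphi(0)\in\Gamma'$ because $0\in\Gamma$.

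Next, since $\Gamma$ is a full lattice in $\R^{2n+k}$, the homomorphism $\rho$ extends to a unique $\R$-linear map $R\colon\R^{2n+k}\to\R^{2n+k}$ with $R|_\Gamma=\rho$. Setting $\psi:=\tilde\varphi-R$ one gets $\psi(p+\gamma)=\psi(p)$ for every $\gamma\in\Gamma$, so $\psi$ descends to the compact torus $\mathbb{T}^{n,k}$ and is therefore bounded. Decomposing $R$ in blocks adapted to $\C^n\times\R^k$, I would write its first component as $Az+A'\bar z+Bt$, where $A$ is the $\C$-linear and $A'$ the conjugate-linear part in $z$ and $B\in M_{n,k}(\C)$, and its second component as $Dz+Ct$ with $C\in M_{k,k}(\R)$ and $D$ real-linear; correspondingly $\psi=(\psi_1,\psi_2)$ with $\psi_1,\psi_2$ bounded.

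It then remains to discard the terms $A'\bar z$ and $Dz$ and to show that $\psi_1,\psi_2$ are independent of $z$. For the second component, $h(t)=Dz+Ct+\psi_2(z,t)$ is independent of $z$ while $\psi_2$ is bounded, which forces $D=0$ and identifies $\gamma(t):=\psi_2$ as a $z$-independent, $\Gamma$-periodic function with $h(t)=Ct+\gamma(t)$. For the first component, holomorphy of $f$ in $z$ forces $G:=\psi_1+A'\bar z=f-Az-Bt$ to be holomorphic in $z$; since $\psi_1$ is bounded, $G$ has at most linear growth in $z$, hence $G(z,t)=M(t)z+c(t)$ is $\C$-affine in $z$. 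Consequently $\psi_1=M(t)z-A'\bar z+c(t)$ is bounded in $z$, and because a nonzero real-linear endomorphism of $\C^n$ is unbounded this yields $M(t)\equiv0$ and $A'=0$, so $A\in M_{n,n}(\C)$. Thus $\beta(t):=\psi_1$ is $z$-independent and $\Gamma$-periodic, $f(z,t)=Az+Bt+\beta(t)$, and $(\beta(0),\gamma(0))=\psi(0)=\tilde\varphi(0)\in\Gamma'$. The main obstacle is precisely this last rigidity step: the interplay of Liouville-type growth control with the boundedness of the periodic remainder is what simultaneously kills the conjugate-linear coefficient $A'$ and forces $\psi_1$ to be independent of $z$, while the preliminary splitting $\tilde\varphi=R+\psi$ is the device that makes the coefficient $A$ automatically a constant matrix rather than a function of $t$.
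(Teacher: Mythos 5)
Your proof is correct, but it takes a genuinely different route from the paper's. The paper argues leafwise, in the style of Birkenhake--Lange: it first invokes the complex-torus argument to write $f(z,t)=A(t)z+B(t)$, then differentiates the lattice-invariance condition $f(z+\zeta,t+\tau)-f(z,t)\in\Gamma'$ with respect to $t$ (after a preliminary CR-change of coordinates arranging that $\Gamma$ contain points of the form $(0,\tau)$) to conclude that $A(t)$ and $B(t)$ are linear-plus-periodic in $t$, and finally differentiates in $z$ to kill the $t$-linear part of $A$ and force $A$ to be constant. You instead extract the period homomorphism $\rho\colon\Gamma\to\Gamma'$, extend it to an $\R$-linear map $R$ (possible because a $\Z$-basis of $\Gamma$ is an $\R$-basis of $\R^{2n+k}$), and split $\tilde\varphi=R+\psi$ with $\psi$ $\Gamma$-periodic, hence bounded; holomorphy in $z$ together with Liouville-type growth control then erases the conjugate-linear block $A'$, the block $D$, and the $z$-dependence of $\psi$. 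Your decomposition buys several things: the constancy of $A$, $B$, $C$ is automatic (they are blocks of the single linear map $R$) rather than the outcome of a differential analysis; no normalization of the lattice is needed; no external citation is required, since your growth argument reproves the leafwise affineness; and you explicitly handle the second component $h(t)=Ct+\gamma(t)$, which the paper's proof never treats --- it stops once $A$ is shown to be constant. One point worth keeping visible: your claim $\tilde\varphi(0)\in\Gamma'$, which yields $(\beta(0),\gamma(0))\in\Gamma'$, rests on the paper's convention that the lift satisfies $\tilde\varphi(\Gamma)\subset\Gamma'$ and not merely $\pi'\circ\tilde\varphi=\varphi\circ\pi$; you invoke that convention explicitly, so the step is justified.
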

\begin{proof}
As already remarked, $\varphi$ is induced by $\tilde{\varphi} : \C^n\times \R^k\to\C^n\times \R^k$,
$$
\tilde\varphi(z,t)=(f(z,t),h(t))\,.
$$
such that, $f(z,t)$ is a holomorphic map of $z$, for any $t\in \R^k$. As for the case of complex tori
(see e.g. \cite[Chap.1, Proposition\,2.1]{BL}) we get
\begin{equation}\label{CRmap1}
f(z,t)= A(t)z+B(t)\,,
\end{equation}
where $A(t)$ and $B(t)$ are a $n\times n$ complex matrix and a vector of $\C^n$ respectively, both depending on $t$. Since
\begin{equation}\label{CRmap2}
f(z+\zeta,t+\tau)-f(z,t)\in\Gamma'\,,
\end{equation}
for any $(z,t)\in \C^n\times \R^k\,$, $(\zeta,\tau)\in\Gamma\subset\C^n\times \R^k$, we have:
\begin{equation}\label{CRmapderivative1}
\frac{\partial}{\partial t_s}(f(z+\zeta,t+\tau)-f(z,t))=0\,,\qquad s=1,\ldots , k\,.
\end{equation}
By taking into account \eqref{CRmap1}, then \eqref{CRmapderivative1} implies that
$$
\frac{\partial}{\partial t_s}[(A(t+\tau)-A(t))z+A(t+\tau)\zeta+B(t+\tau)-B(t)]=0\,.
$$
By a CR-change of coordinates in $\C^n\times \R^k$, we may assume that the
lattice $\Gamma$ contains points of the form $(0,\tau)$.\newline
Hence, evaluating the last expression for $z=0$ and $\zeta=0$ , we
obtain
\begin{equation}\label{CRmapderivative2}
\frac{\partial}{\partial t_s}[B(t+\tau)-B(t)]=0\,.
\end{equation}
Hence
\begin{equation}\label{CRmapderivative3}
\frac{\partial}{\partial t_s}[A(t+\tau)(z+\zeta)-A(t)z]=0\,.
\end{equation}
Then \eqref{CRmapderivative3} and \eqref{CRmapderivative2} imply that
\begin{equation}\label{A}
A^{ij}(t)=\sum_{h=1}^kA^{ij}_ht_h+\alpha^{ij}(t)\,.
\end{equation}
and
\begin{equation}\label{B}
B(t)=Bt+\beta(t)\,,
\end{equation}
where $A^{ij}_h$, $B=(B^{ih})$ are constant and $\alpha^{ij}$, $\beta$ are periodic. Now we are going to show that
$$
A^{ij}_h=0\,,\quad h=1,\ldots ,k,\quad\hbox{\rm and $\alpha^{ij}\,,\quad i,j=1,\ldots, n$, are constant.}
$$
Differentiating \eqref{CRmap2} with respect to $z$ and taking onto account \eqref{CRmap1}, we obtain
$$
\begin{array}{lll}
 0& = & \frac{\partial}{\partial z_r}[f(z+\zeta,t+\tau)-f(z,t)]=\\[5pt]
 {}&=&\frac{\partial}{\partial z_r}[(A(t+\tau)-A(t))z+A(t+\tau)\zeta+B(t+\tau)-B(t)]\,.
\end{array}
$$
Therefore \eqref{A} and \eqref{B} imply that
$$
\frac{\partial}{\partial z_r}[\sum_{h=1}^{k}(\sum_{j=1}^n A^{ij}_h\tau_hz_j+B^{ih}\tau_h)]=0\,.
$$
Hence $A^{ij}_h=0$. By \eqref{CRmapderivative3} we get
$$
\frac{\partial}{\partial t_s}[A(t)\zeta]=0\,.
$$
Therefore $A$ is constant and the proposition is proved.
\end{proof}
%\vskip.5truecm\noindent{\bf Derivando rispetto a $z$ la (2) si ottiene subito che $A$ \'e periodica.\newline
%Derivando rispetto a $t$ si ha che $A'$ e $B'$ sono periodiche....... }\vskip.5truecm\noindent
Let $\mathbb{T}^{n,k}=\C^n\times \R^k/\Gamma$ be a complex foliated torus. Let
$$
\gamma_1=(z_1,t_1),\,\ldots , \gamma_{2n+k}=(z_{2n+k},t_{2n+k})\,,
$$
$z_1,\ldots ,z_{2n+k}\in\C^n\,,t_1,\ldots ,t_{2n+k}\in\R^k\,$, be a $\Z$-basis of $\Gamma$. Then
$$
\Omega=
\left(
\begin{array}{lll}
z_1 & \cdots & z_{2n+k}\\[5pt]
t_1 & \cdots & t_{2n+k}
\end{array}
\right)\,\in M_{n+k,2n+k}(\C)
$$
is called a {\em period matrix} of $\mathbb{T}^{n,k}$. Observe that $\Omega$ depends on the choice of
the $\Z$-basis of $\Gamma$. We have the following
\begin{prop}
Let $\mathbb{T}^{n,k}=\C^n\times \R^k/\Gamma$ and $\mathbb{T'}^{n,k}=\C^n\times \R^k/\Gamma'$ be
complex foliated tori. Denote by $\Omega$ and $\Omega'$ period matrices for $\mathbb{T}^{n,k}$ and $\mathbb{T'}^{n,k}$
respectively. Then $\mathbb{T}^{n,k}$ and $\mathbb{T'}^{n,k}$ are CR-isomorphic if and only if there exists
$$
M=
\left(
\begin{array}{ll}
A & B\\[5pt]
0 & C
\end{array}
\right),
$$
with $A\in\hbox{\rm GL}(n,\C)\,,B\in\hbox{\rm M}_{n,k}(\C)\,,C\in\hbox{\rm GL}(k,\R)$, and
$P\in\hbox{\rm GL}(2n+k,\Z)$ such that
\begin{equation}\label{toriequivalence}
M\Omega=\Omega' P\,.
\end{equation}
\end{prop}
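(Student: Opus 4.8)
The plan is to derive both implications from the normal form for CR-maps furnished by Lemma \ref{CRmaptori}, the guiding principle being that the period relation \eqref{toriequivalence} records exactly how the \emph{linear part} of a lifted CR-isomorphism transports one lattice onto the other, while the periodic correction terms $\beta,\gamma$ contribute nothing. Throughout I write $\gamma_i=(z_i,t_i)$ for the columns of $\Omega$ and $\gamma'_j$ for those of $\Omega'$, so that these are $\Z$-bases of $\Gamma$ and $\Gamma'$ respectively.

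For the sufficiency I would start from given $M$ and $P$ as in the statement and set $\tilde\varphi(z,t)=(Az+Bt,Ct)$. This is a CR-map because $Az+Bt$ is $\C$-linear in $z$ and $C$ is real; since $A\in\mathrm{GL}(n,\C)$ and $C\in\mathrm{GL}(k,\R)$ the block upper triangular matrix $M$ is invertible, and its inverse has the same block shape, so $\tilde\varphi$ is a CR-isomorphism of $\C^n\times\R^k$. The $i$-th column of $M\Omega$ is just $M\gamma_i$, so \eqref{toriequivalence} says precisely that $M\gamma_i=\sum_j P_{ji}\gamma'_j\in\Gamma'$; hence $M(\Gamma)\subseteq\Gamma'$, and because $P\in\mathrm{GL}(2n+k,\Z)$ this inclusion is an equality of lattices. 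Therefore $\tilde\varphi$ descends to a CR-isomorphism $\mathbb{T}^{n,k}\to\mathbb{T'}^{n,k}$.

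For the necessity, let $\varphi$ be a CR-isomorphism. By Lemma \ref{CRmaptori} it lifts to $\tilde\varphi(z,t)=(Az+Bt+\beta(t),Ct+\gamma(t))$, whose linear part is the matrix $M$ displayed in the statement, with $\tilde\varphi(\Gamma)\subseteq\Gamma'$ and $\tilde\varphi(0)=(\beta(0),\gamma(0))\in\Gamma'$. The key observation is that $\beta,\gamma$ depend only on $t$ and are $\Gamma$-periodic, hence invariant under the subgroup $\mathrm{proj}_{\R^k}(\Gamma)\subseteq\R^k$; since $t_i=\mathrm{proj}_{\R^k}(\gamma_i)$ lies in this subgroup, one gets $\beta(t_i)=\beta(0)$ and $\gamma(t_i)=\gamma(0)$ (this holds whether or not the projection is discrete). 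Consequently $\tilde\varphi(\gamma_i)-\tilde\varphi(0)=M\gamma_i$, and as both terms on the left lie in $\Gamma'$, so does $M\gamma_i$. Expanding each $M\gamma_i$ in the basis $\{\gamma'_j\}$ produces an integer matrix $P$ with $M\Omega=\Omega'P$.

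It remains to see that $M$ has the asserted invertibility and that $P$ is unimodular, and this is the step I expect to demand the most care. I would apply Lemma \ref{CRmaptori} also to $\varphi^{-1}$, obtaining a lift $\tilde\psi$ with linear part $M'$ and an integer matrix $P'$ with $M'\Omega'=\Omega P'$. Since $\tilde\psi\circ\tilde\varphi$ covers the identity of $\mathbb{T}^{n,k}$, it is a deck transformation of the universal cover, i.e. a translation by an element of $\Gamma$, so its linear part $M'M$ equals the identity; in particular $M$ is invertible, and being block upper triangular this forces $A\in\mathrm{GL}(n,\C)$ and $C\in\mathrm{GL}(k,\R)$. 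Combining the two period relations then yields $\Omega=\Omega\,(P'P)$; realifying $\Omega$ to the invertible real $(2n+k)\times(2n+k)$ matrix that sends the chosen $\Z$-basis of $\Gamma$ onto its image in $\R^{2n+k}$, one cancels it to obtain $P'P=I$, and symmetrically $PP'=I$, whence $P\in\mathrm{GL}(2n+k,\Z)$, completing the argument.
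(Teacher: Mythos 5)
Your proof is correct and follows essentially the same route as the paper's: sufficiency via the linear CR-map $(z,t)\mapsto(Az+Bt,Ct)$ determined by $M$, and necessity via the normal form of Lemma \ref{CRmaptori} together with the fact that the $\Gamma$-periodic terms $\beta,\gamma$ contribute nothing at lattice points. The only difference is one of detail: you explicitly verify the invertibility of $M$ and the unimodularity of $P$ by lifting $\varphi^{-1}$, steps the paper's terse proof leaves implicit.
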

\begin{proof}
If there exist $M$ and $P$ as in \eqref{toriequivalence}, then the map $\tilde\varphi(z,t)=(Az+Bt,Ct)$ induces
a CR-diffeomorphism between $\mathbb{T}^{n,k}$ and $\mathbb{T'}^{n,k}$. Vice versa, if $\varphi: \mathbb{T}^{n,k}\to \mathbb{T'}^{n,k}$
is a diffeomorphism, then Lemma \ref{CRmaptori} implies that $\tilde\varphi(z,t)=(Az+Bt+\beta(t),Ct+\gamma(t))$. Since
$(\beta(0),\gamma(0))\in\Gamma'$, then \eqref{toriequivalence} holds.
\end{proof}
\medskip\noindent
We will denote by $L_{n,k}$ the group of matrices given by
$$
L_{n,k}=\left\{
M=
\begin{pmatrix}
A & B\\
0 & C
\end{pmatrix}\,\Big\vert \, A\in\hbox{\rm GL}(n,\C)\,,B\in\hbox{\rm M}_{n,k}(\C)\,,C\in\hbox{\rm GL}(k,\R)
\right\}\,.
$$
Two period matrices $\Omega$ and $\Omega'$ are said to be {\em equivalent} if they satisfy
condition \eqref{toriequivalence} for some $M\in L_{n,k}$, $P\in\hbox{\rm GL}(2n+k,\Z)$. \newline
By definition, the {\em moduli space}
of {\em complex foliated tori} is the quotient $\mathcal{M}_{n,k}$ of the space of period matrices modulo the
equivalence relation \eqref{toriequivalence}.\newline
It can be checked that any equivalence class of period matrices has a representative $\Omega$ of the following form
$$
\Omega=
\begin{pmatrix}
Z & 0\\
T & I_k
\end{pmatrix}\,.
$$
We will call {\em adapted} such a period matrix.
\section{Moduli of polarized complex foliated tori}
Let $\mathbb{T}^{n,k}=\C^n\times \R^k/\Gamma$ be a complex foliated torus.
\begin{definition}\label{polarization}
A {\em polarization} on $\mathbb{T}^{n,k}$ is given by a skew-symmetric bilinear form
$\omega$ on $\R^{2n}\times\R^k$ such that:
\begin{enumerate}
\item[i)] $\omega_{\vert_{\Gamma\times\Gamma}}$ takes integral values;
\item[ii)] $\omega_{\vert_{\R^{2n}\times\{0\}}}$ is the imaginary part of a positive definite Hermitian form on $\C^n$.
\end{enumerate}
\end{definition}
A complex foliated torus $\mathbb{T}^{n,k}$ is said to be {\em CR-projective} if it can be CR-embedded into a $\Proj^N(\C)$.
It turns out that a complex foliated torus $\mathbb{T}^{n,k}$ is CR-projective if and only if
there exists a polarization $\omega$ on $\mathbb{T}^{n,k}$ (see \cite{CT}). For a CR-embedding theorem of a compact Levi-flat
manifold of codimension one can see \cite[Theorem 3]{OS}.

A pair $(\mathbb{T}^{n,k},\omega)$ is said to be a {\em polarized complex foliated torus}. Two
polarized complex foliated tori $(\mathbb{T}^{n,k},\omega)$ and $(\mathbb{T'}^{n,k},\omega')$ are said to be {\em equivalent}
if there exists a CR-diffeomorphism $\varphi : \mathbb{T}^{n,k}\to\mathbb{T'}^{n,k}$ such that
the restrictions of
$\omega$ and $\varphi^*\omega'$ to $\C^n\times\{0\}$ coincide.
\begin{rem}\label{adaptedform} Any polarized complex foliated torus $(\mathbb{T'}^{n,k},\omega')$ is
equivalent to $(\mathbb{T}^{n,k},\omega)$, where $\omega$ is represented by
$$
\begin{pmatrix}
\eta & 0\\
0 & 0
\end{pmatrix}
$$
and $\mathbb{T}^{n,k}$ has an adapted period matrix. To show this, set
$\eta = \omega'_{\vert_{\C^n\times\{0\}}}$. Let
$$
\Omega' =
\begin{pmatrix}
Z' & W'\\
T' & R'
\end{pmatrix}
$$
be a period matrix for $\mathbb{T'}^{n,k}$. By changing the order of the columns of $\Omega'$ we
may assume that $R'$ is invertible. Then, by acting on the left with
$$
M=
\begin{pmatrix}
A & -AW'R'^{-1}\\
0 & R'^{-1}
\end{pmatrix}\,,
$$
where $A\in {\rm U}(n)$, we obtain that $\Omega$ is an adapted period matrix and that the restriction of
$\varphi^*\omega'$ to $\C^n\times\{0\}$
is still $\eta$, since $A\in {\rm U}(n)$ (where $\varphi$ is the CR-diffeomorphism represented by the matrix $M$).\newline
As for the complex case, there exist $d_1,\ldots ,d_n\in\N$ with $d_{i}\vert d_{i+1}\,,i=1,\ldots ,n-1$,
such that the polarization $\omega$ can be represented by the matrix
\begin{equation}\label{canonicalform}
\begin{pmatrix}
0 & \Delta & 0\\
-\Delta &0 &0\\
0 & 0 & 0
\end{pmatrix}\,,
\end{equation}
where $\Delta$ is the diagonal matrix having diagonal entries $d_1,\ldots ,d_n$ (see e.g. \cite[Chap.VI, Proposition\,1.1]{D},
\cite[Lemma p.204]{GH}).
\end{rem}
Let
\begin{eqnarray*}
F_{n,k}&=&\left\{
M=
\begin{pmatrix}
A & 0\\
0 & I_k
\end{pmatrix}\,\Big\vert\,\ A\in {\rm U}(n)
\right\}\,,\\
H_{n,k}&=&\left\{
P=
\begin{pmatrix}
\alpha & 0\\
\beta & I_k
\end{pmatrix}\,\Big\vert\,\ \alpha\in {\rm Sp}(2n,\Z)\,,\beta\in {\rm M}_{k,2n}(\Z)\,
\right\}\,.
\end{eqnarray*}
Let $\mathbb{T}^{n,k}$ and $\mathbb{T'}^{n,k}$ be two complex foliated tori endowed with the same polarization
$\omega$. As observed in Remark \ref{adaptedform}, we may assume that $\omega$ has the canonical form
given by \eqref{canonicalform} and $\Omega$ and $\Omega'$ are adapted period matrices
of $\mathbb{T}^{n,k}$ and $\mathbb{T'}^{n,k}$ respectively.\newline
We say that $\Omega$ and $\Omega'$ are {\em equivalent} if there exist $M\in F_{n,k}$ and $P\in H_{n,k}$ such that
\begin{equation}\label{polarizedtoriequivalence}
M\Omega = \Omega' P\,.
\end{equation}
\begin{definition}
The {\em moduli space} of CR-projective complex foliated tori is the quotient $\mathcal{M}_{n,k}^\omega$ of the space
of period matrices modulo the
equivalence relation \eqref{polarizedtoriequivalence}.
\end{definition}
We are going to describe explicitly the space of period matrices. Let
$\Omega$ be an adapted period matrix for the polarized complex foliated
torus $(\mathbb{T}^{n,k},\omega)$.
Denote by $\gamma_1,\ldots,\gamma_{2n+k}$ the column vectors of $\Omega$. Set
$$
\left\{
\begin{array}{lll}
J\gamma_i &= \gamma_{n+i}\,,\quad &i=1,\ldots ,n\,,\\
J\gamma_{n+i} &= -\gamma_i \,,\quad &i=1,\ldots ,n\,.
\end{array}
\right.
$$
Then $J$ defines a complex structure on the real vector space $V$ spanned by
$\gamma_1,\ldots , \gamma_{2n}$. These
vectors give an $\R$-isomorphism $f$ between $V$ and $\R^{2n}$ by setting
$f(\gamma_i) = e_i\,,i=1,\ldots ,2n$. In this way $(V,J)$ is isomorphic to $(\R^{2n},J_0)$. Then
$$
g(u,v)=f^*\omega(u,Jv)
$$
is a $J$-invariant inner product on $V$. Then, by setting
$$
L=V^{1,0}=\left\{x-iJx\,\vert\,x\in V\right\}
\,,
$$
we get that the complex $n$-plane $L\subset\C^{2n+k}$ satisfies
$$
L\cap\overline{L}=\{0\}\,,\quad  f^*(\hat\omega_{\vert_{\C^{2n}}})=0\,,
$$
where $\hat\omega$ is the complexification of $\omega$.
Hence, we have proved the following
\begin{prop}\label{period}
The space of period matrices of CR-projective complex foliated tori is given by
$$
\mathcal{U}_{n,k}^\omega=\left\{L\in \hbox{\rm Gr}_\C(n,2n+k)\,\vert \, L\cap\overline{L}=\{0\}\,,\,\, L\,\,
\hbox{\rm is}\,\, f^*\hat\omega\hbox{\rm -isotropic}\right\}\,.
$$
\end{prop}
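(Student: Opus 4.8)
The plan is to show that the construction preceding the statement sets up a bijection $\Omega\mapsto L=V^{1,0}$ between adapted period matrices of polarized CR-projective tori and the points of $\mathcal{U}_{n,k}^\omega$. Since each adapted $\Omega$ already produces, through the complex structure $J$ determined by $J\gamma_i=\gamma_{n+i}$, a complex $n$-plane $L\subset\C^{2n+k}$, the first task is to verify that $L$ satisfies the two defining conditions, and the second is to reverse the construction.

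For the condition $L\cap\overline L=\{0\}$ the argument is direct: $L=V^{1,0}$ and $\overline L=V^{0,1}$ are the $\pm i$-eigenspaces of the $\C$-linear extension of $J$ to $V\otimes\C$, whence $V\otimes\C=L\oplus\overline L$ and the intersection is trivial. For the isotropy I would use that $g(u,v)=f^*\omega(u,Jv)$ is symmetric. Symmetry, combined with the skew-symmetry of $f^*\omega$, yields the two identities $f^*\omega(Ju,Jv)=f^*\omega(u,v)$ and $f^*\omega(u,Jv)+f^*\omega(Ju,v)=0$. Writing $v=a-iJa$ and $w=b-iJb$ with $a,b\in V$ and expanding $f^*\hat\omega(v,w)$, the first identity annihilates the real part and the second annihilates the imaginary part, so $f^*\hat\omega$ vanishes identically on $L$. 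This is the heart of the forward direction and is essentially the computation sketched in the construction above.

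For the converse I would recover the torus from $L$. The hypothesis $L\cap\overline L=\{0\}$ forces $W:=L\oplus\overline L$ to be a $2n$-dimensional complex subspace of $\C^{2n+k}$ stable under conjugation; hence $V:=W\cap\R^{2n+k}$ is a real $2n$-plane with $V\otimes\C=W$, and the splitting $W=L\oplus\overline L$ equips $V$ with a complex structure $J$ for which $V^{1,0}=L$. Selecting a $\C$-basis $\gamma_1,\dots,\gamma_n$ of $(V,J)$, setting $\gamma_{n+i}=J\gamma_i$, and adjoining the standard generators of the $\R^k$-factor, one assembles an adapted period matrix whose associated plane is again $L$. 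Running the previous expansion backwards, the isotropy of $L$ is seen to be equivalent to the $J$-invariance of $\omega$ on $V$, i.e.\ to the compatibility (Riemann-type) relation expressing that $\omega$ is of type $(1,1)$ with respect to $J$.

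The step I expect to be the main obstacle is the positivity recorded in condition (ii) of Definition \ref{polarization}: isotropy of $L$ only encodes the vanishing of the $(2,0)$-part of $\omega$, whereas a genuine polarization also demands that the symmetric form $g(u,v)=f^*\omega(u,Jv)$ be positive definite. I would therefore check that this positivity is exactly the open condition cutting out CR-projective tori, conveniently written as $-i\,f^*\hat\omega(v,\overline v)>0$ for every nonzero $v\in L$, and that it persists under the reverse construction. Finally, to upgrade the correspondence to a genuine bijection, I would verify that the sole ambiguity in recovering $\Omega$ from $L$ is the choice of $\C$-basis of $(V,J)$, an ambiguity absorbed by the adapted normalization, so that $\mathcal{U}_{n,k}^\omega$ parametrizes the period matrices without redundancy.
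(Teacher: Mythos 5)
Your forward direction is essentially the paper's own proof: what the paper offers as a proof of Proposition \ref{period} is exactly the construction of $J$, $f$, $g$ and $L=V^{1,0}$ preceding the statement, and your expansion of $f^*\hat\omega(a-iJa,\,b-iJb)$ into real and imaginary parts is the right computation. One caution, though: you ``use that $g(u,v)=f^*\omega(u,Jv)$ is symmetric'', but symmetry of $g$ is \emph{equivalent} to the $J$-invariance of $f^*\omega$, hence to the isotropy of $L$ you are trying to establish, so as written this step is circular. The non-circular route is to observe that $f$ intertwines $J$ with the standard complex structure $J_0$ of $\R^{2n}\simeq\C^n$ (since $f(\gamma_i)=e_i$, $J\gamma_i=\gamma_{n+i}$, $J_0e_i=e_{n+i}$), so that $g(u,v)=\omega(fu,J_0fv)$; then condition (ii) of Definition \ref{polarization} --- $\omega\vert_{\R^{2n}}$ is the imaginary part of a positive definite Hermitian form for $J_0$ --- gives simultaneously that $g$ is symmetric \emph{and} positive definite. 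That is the only place where the polarization hypothesis enters, and it delivers both Riemann-type relations at once.

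The genuine gap is the one you flag and then defer: positivity. It cannot be ``checked'' to follow from the two conditions defining $\mathcal{U}_{n,k}^\omega$, because it does not follow; it is an independent open condition, and your reverse construction therefore fails on part of $\mathcal{U}_{n,k}^\omega$. Already for $n=1$, $k=0$, isotropy is vacuous (every line is isotropic for a skew form) and $\{L\in \hbox{\rm Gr}_\C(1,2)\ \vert\ L\cap\overline L=\{0\}\}$ is $\Proj^1(\C)\setminus\Proj^1(\R)$, the union of two half-planes; but period matrices adapted to a polarization (a lattice basis $\gamma_1,\gamma_2$ with $\omega(\gamma_1,\gamma_2)=d>0$ and $\omega(u,J_0u)>0$) sweep out only the component on which $-i\hat\omega(v,\bar v)>0$, and applying your reverse construction to a point of the other component yields a torus whose transported canonical form violates condition (ii) of Definition \ref{polarization}. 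For $k\geq 1$ there is a second failure mode that only positivity rules out: e.g.\ for $n=1$, $k=1$ the line $L=\C(e_1+ie_3)$ is isotropic with $L\cap\overline L=\{0\}$, yet $V=(L\oplus\overline L)\cap\R^3$ contains $e_3\in\{0\}\times\R$, so no invertible adapted period matrix with first $2n$ columns spanning $V$ exists at all. Hence surjectivity of $\Omega\mapsto L$ onto $\mathcal{U}_{n,k}^\omega$ as literally defined is false; a correct statement must build the open condition $-if^*\hat\omega(v,\bar v)>0$ into $\mathcal{U}_{n,k}^\omega$, exactly as in the classical description of the Siegel space. (To be fair, the paper never confronts this: its proof stops at the forward construction, asserting positivity of $g$ but never recording it in the definition of $\mathcal{U}_{n,k}^\omega$, and the converse inclusion is not proved. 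Your outline is the more complete one, but the step you postpone is precisely the one that breaks.) A smaller inaccuracy: the ambiguity in recovering $\Omega$ from $L$ is not ``absorbed by the adapted normalization''; every $\C$-basis of $(V,J)$ compatible with the canonical form of $\omega$ gives an adapted period matrix with the same $L$, and this residual ${\rm U}(n)$-freedom is exactly the group $F_{n,k}$, which is why Theorem \ref{principale} afterwards quotients $\mathcal{U}_{n,k}^\omega$ only by $H_{n,k}$.
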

As a consequence of the previous proposition we get the following
\begin{cor}
The space of period matrices of CR-projective complex foliated tori is a K\"ahler manifold of
complex dimension $\frac{1}{2}n(n+1+2k)$.
\end{cor}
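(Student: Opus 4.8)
The plan is to realize $\mathcal U_{n,k}^\omega$ as a complex submanifold of the Grassmannian $\mathrm{Gr}_\C(n,2n+k)$ and to read off its dimension by fibering it over a Lagrangian Grassmannian; the K\"ahler property will then follow automatically by restricting the natural K\"ahler metric of the ambient Grassmannian.

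First I would set $K=\ker\hat\omega\subset\C^{2n+k}$. Since $\omega$ has the canonical form \eqref{canonicalform}, the form $\hat\omega$ has rank $2n$, the kernel $K$ is a complex $k$-plane with $\overline K=K$, and the induced form $\sigma$ on the quotient $\C^{2n+k}/K\cong\C^{2n}$ is nondegenerate. From the construction preceding Proposition \ref{period} (where $V_\C=L\oplus\overline L$ is a complement of $K$, so that the isomorphism $f$ identifies it with the symplectic part of the space) every $L\in\mathcal U_{n,k}^\omega$ satisfies $L\cap K=\{0\}$. Hence the projection $\pi\colon L\mapsto(L+K)/K$ sends $L$ to an $n$-dimensional subspace $\pi(L)\subset\C^{2n}$, and since $K$ is exactly the radical of $\hat\omega$, the restriction $\hat\omega|_L$ vanishes if and only if $\sigma|_{\pi(L)}$ vanishes. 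Thus $L$ is isotropic precisely when $\pi(L)$ is Lagrangian, and $\pi$ maps $\mathcal U_{n,k}^\omega$ into the Lagrangian Grassmannian $\Lambda=\mathrm{LG}(n,2n)$.

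Next I would identify the fibres of $\pi$: for a fixed Lagrangian $\ell\in\Lambda$, the $n$-planes $L$ with $L\cap K=\{0\}$ and $\pi(L)=\ell$ are the complements of $K$ inside $\pi^{-1}(\ell)$, i.e.\ they form an affine space modelled on $\mathrm{Hom}_\C(\ell,K)\cong\C^{nk}$, and by the previous paragraph each of them is automatically $\hat\omega$-isotropic. Therefore $\pi$ exhibits $\mathcal U_{n,k}^\omega$ as an open subset (cut out by the open condition $L\cap\overline L=\{0\}$) of the total space of a holomorphic fibre bundle over $\Lambda$ with fibre $\C^{nk}$. In particular $\mathcal U_{n,k}^\omega$ is a complex manifold, and since $\dim_\C\Lambda=\tfrac12 n(n+1)$ and each fibre has dimension $nk$, its complex dimension is
$$
\dim_\C\mathcal U_{n,k}^\omega=\tfrac12 n(n+1)+nk=\tfrac12 n(n+1+2k),
$$
as claimed.

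Finally, for the K\"ahler property I would note that $\mathcal U_{n,k}^\omega$ is a complex submanifold of $\mathrm{Gr}_\C(n,2n+k)$: the isotropy condition is the vanishing of the holomorphic section $L\mapsto\hat\omega|_L$ of the bundle $\bigwedge^{2}S^{*}$, with $S$ the tautological bundle, and the fibration above shows that this vanishing locus is smooth along $\mathcal U_{n,k}^\omega$. The Grassmannian carries its standard homogeneous K\"ahler metric, and the restriction of a K\"ahler metric to a complex submanifold is again K\"ahler; giving $\mathcal U_{n,k}^\omega$ this induced metric finishes the proof. The main point requiring care is the smoothness together with the exact value of the dimension, and the fibration over $\Lambda$ is precisely the device that makes both transparent; the remaining verifications are routine.
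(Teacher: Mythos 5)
Your overall strategy---split off the kernel $K=\ker\hat\omega$, fibre over the Lagrangian Grassmannian, and get the K\"ahler property by restricting the ambient metric---is the natural one; the paper itself offers no proof of this corollary (it is stated as immediate from Proposition \ref{period}), so your argument supplies what the paper omits. But it contains a genuine gap, precisely at the step you lean on hardest: the claim that every $L\in\mathcal U_{n,k}^\omega$ satisfies $L\cap K=\{0\}$. This does not follow from the two conditions that define $\mathcal U_{n,k}^\omega$, and it is in fact false for them. Take $n=1$, $k=2$, so that $K=\mathrm{span}_\C(e_3,e_4)$: the line $L=\C(e_3+ie_4)$ is isotropic (every line is) and satisfies $L\cap\overline L=\{0\}$, yet $L\subset K$. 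More generally, for any $n\ge 1$, $k\ge 2$, the plane $L=\mathrm{span}_\C\{e_1+ie_{n+1},\dots,e_{n-1}+ie_{2n-1},\,e_{2n+1}+ie_{2n+2}\}$ lies in $\mathcal U_{n,k}^\omega$ and meets $K$. At such points your projection $\pi$ does not land in $\mathrm{Gr}_\C(n,2n)$ and the fibration argument collapses. Nor can you appeal to ``the construction preceding Proposition \ref{period}'': that construction shows only that planes in its image avoid $K$ (because $g=f^*\omega(\cdot,J\cdot)$ is positive definite, hence $\omega$ is nondegenerate on $V$), whereas everywhere else you treat $\mathcal U_{n,k}^\omega$ as the set cut out by the two stated conditions; the two readings do not agree, and the discrepancy is exactly the point at issue.

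What is missing---in your proof, and already in Proposition \ref{period}---is condition ii) of Definition \ref{polarization}, which in terms of $L$ reads $-i\hat\omega(v,\bar v)>0$ for all $0\ne v\in L$. Together with isotropy this open condition forces both $L\cap\overline L=\{0\}$ and $L\cap K=\{0\}$ (a kernel vector pairs to zero with its own conjugate); once it is restored in the definition, your fibration over $\mathrm{LG}(n,2n)$ (in fact over its Siegel-type open subset), the count $\frac{1}{2}n(n+1)+nk$, and the K\"ahler-by-restriction argument go through verbatim and give a complete proof. The omission is not cosmetic: with the literal definition the corollary itself fails once $n\ge 2$ and $k\ge 4$. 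For $n=2$, $k=4$, the plane $L_0=\C(e_5+ie_6)\oplus\C(e_7+ie_8)\subset K$ belongs to $\mathcal U_{2,4}^\omega$; in the affine chart $\mathrm{Hom}(L_0,M)$ of $\mathrm{Gr}_\C(2,8)$ centred at $L_0$ (with $M$ a complement of $L_0$ and $u,v$ a basis of $L_0$), the isotropy condition is the single equation $\hat\omega(\phi u,\phi v)=0$, whose linear part vanishes because $u,v\in K$, and whose quadratic part has rank $8$; its zero locus is therefore a quadric cone, singular at the vertex, so $\mathcal U_{2,4}^\omega$ is not a manifold at $L_0$. (For $n=1$ the conclusion happens to survive, since isotropy is vacuous and $\mathcal U_{1,k}^\omega$ is open in $\Proj^{k+1}(\C)$, but your argument still breaks at the points exhibited above.) So the fix is not a matter of polishing your write-up but of correcting the defining conditions of $\mathcal U_{n,k}^\omega$; with positivity added, your proof is exactly the right one.
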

Now we can describe the moduli space (for the case of Abelian varieties see e.g. \cite{BL}, \cite{D}). We have the
following
\begin{theorem}\label{principale}
The moduli space $\mathcal{M}_{n,k}^\omega$ of CR-projective complex foliated tori is given by
$$
\mathcal{M}_{n,k}^\omega\simeq\mathcal{U}_{n,k}^\omega\big/H_{n,k}\,.
$$
Any discrete subgroup of $H_{n,k}$ acts properly discontinuously on
$\mathcal{U}_{n,k}^\omega$. In particular, $\mathcal{M}_{n,k}^\omega$ is a normal analytic space.
\end{theorem}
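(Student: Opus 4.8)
The plan is to prove the three assertions in turn: the identification of the quotient is essentially formal, the substance lies in the proper discontinuity, and normality will then follow from a standard theorem of H.~Cartan. The identification $\mathcal{M}^\omega_{n,k}\simeq\mathcal{U}^\omega_{n,k}/H_{n,k}$ I would obtain by reading Proposition \ref{period} against the definition of the equivalence relation. By that proposition the space of adapted period matrices is $\mathcal{U}^\omega_{n,k}$, the plane $L=V^{1,0}$ attached to $\Omega$ being the complete invariant once the unitary frame has been forgotten: the left action of $F_{n,k}$ only changes the chosen $\mathrm{U}(n)$-frame of the leaf directions and is therefore already divided out in passing to $\mathcal{U}^\omega_{n,k}$, which is the usual phenomenon by which a period domain is a quotient $G/K$ by its maximal compact $K$. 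Since the equivalence \eqref{polarizedtoriequivalence} is generated by this left $F_{n,k}$-action together with the right action of $P\in H_{n,k}$, only the latter survives on $\mathcal{U}^\omega_{n,k}$, where it is induced by the complexified integral automorphism $\begin{pmatrix}\alpha & 0\\ \beta & I_k\end{pmatrix}$ of $\C^{2n+k}$. One verifies that this automorphism preserves the two defining conditions---the condition $L\cap\overline L=\{0\}$ because the matrix is real and hence commutes with conjugation, and the isotropy condition because $\alpha\in\mathrm{Sp}(2n,\Z)$ preserves $\omega$ while the block-triangular shape preserves the foliation---so the $H_{n,k}$-action on $\mathcal{U}^\omega_{n,k}$ is well defined and the first assertion holds.

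For the proper discontinuity I would not argue with $H_{n,k}$ directly but embed it into the real Lie group
$$
G=\left\{\begin{pmatrix}\alpha & 0\\ \beta & I_k\end{pmatrix}\ \Big\vert\ \alpha\in\mathrm{Sp}(2n,\R),\ \beta\in\mathrm{M}_{k,2n}(\R)\right\}\cong \mathrm{M}_{k,2n}(\R)\rtimes\mathrm{Sp}(2n,\R)
$$
obtained by allowing the entries of $H_{n,k}$ to be real. Its elements are real matrices preserving $\omega$, so $G$ acts holomorphically on $\mathcal{U}^\omega_{n,k}$ by the same formula, and $H_{n,k}=G\cap\mathrm{GL}(2n+k,\Z)$ is a discrete subgroup of $G$, as is every one of its subgroups. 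The point of the enlargement is the general principle that if $G$ acts \emph{properly} on $\mathcal{U}^\omega_{n,k}$, then every discrete subgroup of $G$ acts properly discontinuously; the whole matter thus reduces to proving that the $G$-action is proper.

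To prove properness I would exhibit $\mathcal{U}^\omega_{n,k}$ as a homogeneous space $G/K$ with $K$ compact. Sending a plane $L$ to the $\omega$-compatible complex structure it induces on $(\R^{2n},\omega\vert_{\R^{2n}})$ gives a $G$-equivariant projection of $\mathcal{U}^\omega_{n,k}$ onto the Siegel domain $\mathrm{Sp}(2n,\R)/\mathrm{U}(n)$, whose fibre is the affine space $\mathrm{M}_{k,2n}(\R)$ recording the shearing of the leaf directions into $\R^k$; the dimension count $\frac{1}{2}n(n+1)+nk$ of the Corollary is exactly $\dim_\C\bigl(\mathrm{Sp}(2n,\R)/\mathrm{U}(n)\bigr)+nk$ and confirms this picture. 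On this fibration $\mathrm{Sp}(2n,\R)$ acts transitively on the base with isotropy $\mathrm{U}(n)$, while the normal subgroup $\mathrm{M}_{k,2n}(\R)$ acts by translations simply transitively on each fibre; hence $G$ acts transitively on $\mathcal{U}^\omega_{n,k}$ and the stabilizer of a point is a copy of the compact group $\mathrm{U}(n)$. A transitive action of a Lie group with compact isotropy is proper, so $G$ acts properly and every discrete subgroup of $H_{n,k}$ acts properly discontinuously. I expect this identification $\mathcal{U}^\omega_{n,k}\cong G/\mathrm{U}(n)$---that is, the transitivity of $G$ on the fibres and the computation of the isotropy---to be the main obstacle, since it is here that the geometry of the fibration over the Siegel domain, and the fact that $G$ genuinely preserves the isotropy condition, must be made precise.

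Finally, normality will follow quickly. By the Corollary $\mathcal{U}^\omega_{n,k}$ is a complex (indeed K\"ahler) manifold, and $H_{n,k}$ acts on it by biholomorphisms, its elements being restrictions of $\C$-linear automorphisms of $\C^{2n+k}$. A properly discontinuous group of biholomorphisms of a complex manifold has finite stabilizers, so by H.~Cartan's theorem the quotient is a normal complex analytic space, locally modelled on $\C^N/H_x$ for the finite stabilizer $H_x$ of a point $x$; such a finite quotient is normal because the ring of invariants of a finite group acting on a regular ring is integrally closed. Applied to $\mathcal{U}^\omega_{n,k}/H_{n,k}$ this shows that $\mathcal{M}^\omega_{n,k}$ is a normal analytic space.
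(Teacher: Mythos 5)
Your proposal is correct, but it proves the heart of the theorem---the proper discontinuity---by a genuinely different route from the paper. The paper never enlarges $H_{n,k}$ to the real group $\mathrm{M}_{k,2n}(\R)\rtimes\mathrm{Sp}(2n,\R)$ and never identifies $\mathcal{U}_{n,k}^\omega$ as a homogeneous space; it argues by a direct compactness estimate. Given compact sets $K_1,K_2\subset\mathcal{U}_{n,k}^\omega$ and an element $M$ of the discrete group, with blocks $\alpha$ and $\beta$, such that $MK_1\cap K_2\neq\emptyset$, the paper picks $V_M\in MK_1\cap K_2$, compares the complex structure $J$ of $V_M$ with the structure $J'=\alpha J\alpha^{-1}$ of $M^{-1}V_M\in K_1$, observes that ${}^tJ\omega$ and ${}^tJ'\omega$ are symmetric positive definite, diagonalizes them orthogonally as ${}^tQDQ$ and ${}^tQ'D'Q'$, and from $D'={}^tSDS$ with $S=Q\alpha Q'$ concludes that $\alpha$, and then $\beta$, range over a compact set; discreteness then leaves only finitely many such $M$. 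This is more elementary than your argument: it needs no transitivity statement, no isotropy computation, and no general theory of proper actions. What your route buys is stronger, more structural information: properness of the action of the whole real group, and the identification $\mathcal{U}_{n,k}^\omega\cong G/\mathrm{U}(n)$ as a Siegel domain fibred by the affine spaces $\mathrm{M}_{k,2n}(\R)$, which incidentally re-derives the dimension count $\frac{1}{2}n(n+1+2k)$ of the Corollary. One caution, at exactly the step you flag as the main obstacle: transitivity holds only on the locus of \emph{positive} planes, those on which the Hermitian form $i\hat\omega(x,\bar{y})$ is positive definite. With the literal definition of $\mathcal{U}_{n,k}^\omega$ in Proposition \ref{period} (only $L\cap\overline{L}=\{0\}$ plus isotropy), the set splits into several open orbits indexed by the signature of this form, and on the mixed-signature orbits the isotropy is a non-compact group of type $\mathrm{U}(p,q)$, so properness---and indeed the conclusion of the theorem---fails there. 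The paper's own proof tacitly restricts to the positive locus too (it uses $\omega(u,Ju)>0$ for $u\neq 0$), so this is a defect of the stated definition rather than of your argument; still, for your homogeneous-space identification to be true you must build the positivity condition into $\mathcal{U}_{n,k}^\omega$. The other two assertions are treated essentially identically in both proofs: the identification $\mathcal{M}_{n,k}^\omega\simeq\mathcal{U}_{n,k}^\omega/H_{n,k}$ is read off from Proposition \ref{period} (you supply the useful extra observation that the $F_{n,k}$-action becomes trivial on the plane $L$), and normality comes from Cartan's theorem on quotients by properly discontinuous groups of biholomorphisms, which the paper invokes implicitly and you spell out.
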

\begin{proof} By Proposition \ref{period}, we immediately get that $\mathcal{M}_{n,k}^\omega\simeq \mathcal{U}_{n,k}^\omega\big/H_{n,k}\,$.
\newline
Now, we show that any discrete subgroup $G$ of $H_{n,k}$ acts properly discontinuously on
$\mathcal{U}_{n,k}^\omega$. Consider two compact sets $K_1$ and $K_2$ in $\mathcal{U}_{n,k}^\omega$. Let
$$
M=
\begin{pmatrix}
\alpha & 0\\
\beta & I_k
\end{pmatrix}
\in G
$$
such that $MK_1\cap K_2\neq\emptyset$. Let $V_M\in MK_1\cap K_2$ and set $V'_M = M^{-1} V_M$. Denote
by $J$ and $J'$ be
the corresponding complex structures on $V_M$ and $V'_M$, respectively. Then we have
$$
\omega(Ju,Jv)=\omega(u,v)\,,\forall u,v\in V_M\,,\,\,\omega(u,Ju)>0\,,\,\forall u\neq 0
$$
and the same holds for $V'_M$. Take a symplectic basis
${\mathcal B}=\{\gamma_1,\ldots ,\gamma_{2n}\}$ of $V_M$ and let
${\mathcal B}'=\{\gamma'_1=M^{-1}\gamma_1,\ldots ,\gamma'_{2n}=M^{-1}\gamma_{2n}\}$. Then, with respect to the
bases ${\mathcal B}$ and ${\mathcal B}'$ of $V_M$
and $V'_M$ respectively (denoting with the same letters the matrices for $J$, $J'$ and
$\omega$), we have $J'=\alpha J\alpha^{-1}$ and, consequently, ${}^tJ\omega J=\omega\,,\,\,{}^tJ'\omega J'=\omega$. Therefore,
${}^tJ\omega$ and ${}^tJ'\omega$ are symmetric and positive definite matrices. Hence,
there exist two orthogonal matrices $Q$, $Q'$ such that
$$
{}^tJ\omega ={}^tQ D Q\,,\quad{}^tJ'\omega ={}^tQ' D' Q'\,,
$$
where $D$ and $D'$ are diagonal and positive definite matrices. By taking into account the previous relations, we get
$D'={}^t SDS$, where $S=Q\alpha Q'$, so that $S$ varies in a compact set. Therefore, $\alpha$ lies in a
compact set. By the definition of the action, it can be easily checked that also $\beta$ varies in a compact set. Hence $G$ is finite.
\end{proof}


\begin{thebibliography}{12}%%%%%%%%%%%%%%% BIBLIOGRAFIA
\bibitem{AG1} Andreotti A., Gherardelli F.: Estensioni commutative di variet\`a
abeliane (variet\`a quasi-abeliane), in {\em Aldo Andreotti: Selecta}, Vol. II, Tomo I,
Scuola Normale Superiore, Pisa (1992), pp. 434--465.
\bibitem{AG2} Andreotti A., Gherardelli F.: Some remarks on quasi-abelian manifolds,
{\em Global analysis and its applications} (Lectures, Internat. Sem. Course, Internat. Centre Theoret. Phys., Trieste, 1972), Vol. II, pp.
203--206. {\em Internat. Atomic Energy Agency}, Vienna, (1974).
\bibitem{AK} Abe Y., Kopfermann K.: {\em Toroidal groups, line bundles, cohomology and quasi-abelian varieties},
Lecture Notes in Mathematics, {\bf 1759}. Springer-Verlag, Berlin, 2001.
\bibitem{BL} Birkenhake C., Lange H.: {\em Complex abelian varieties}.
Second edition. Grundlehren der Mathematischen Wissenschaften, {\bf 302}. Springer-Verlag, Berlin, 2004.
\bibitem{CC} Capocasa F., Catanese F.: Periodic meromorphic functions,
{\em Acta Math.} {\bf 166} (1991), 27--68.
\bibitem{CT} Caressa P., Tomassini A.: Complex Foliated Tori and Their Moduli Spaces,
{\em Commun. Contemp. Math.} {\bf 7} (2005), 311--324.
\bibitem{D} Debarre O.: {\em Tores et variet\'es ab\'eliennes complexes}. Courses Sp\'ecialis\'es {\bf 6}.
Soci\'et\'e Math\'ematique de France, (1999).
%\bibitem{GT} Gigante G., Tomassini G.: Bundles over foliations with complex leaves, in
%{\em Complex Analysis and Geometry}, V. Ancona, E. Ballico, A. Silva (eds.), Lect. Notes in Pure and Applied Math.
%{\bf 173} Marcel Dekker, New York, (1995), 213--228.
\bibitem{GH} Griffiths P., Harris J.: {\em Principles of Algebraic Geometry}. Pure
and Applied Mathematics. Wyley \& Sons, New York, 1978.
\bibitem{OS} Ohsawa T., Sibony N.: K\"ahler identity on Levi flat manifolds and application to the embedding, {\em Nagoya Math. J.}
{\bf 158} (2000), 87--93.
\end{thebibliography}
\end{document}